\documentclass[a4paper, 12pt]{amsart}

\usepackage[T1]{fontenc}
\usepackage[utf8]{inputenc}
\usepackage[left=3.5cm, right=3.5cm, top=3.75cm, bottom=3cm]{geometry}

\usepackage{amsmath, amssymb, amsthm}
\usepackage{chngcntr, cite, color}
\usepackage{dsfont}
\usepackage{enumitem, etoolbox}
\usepackage{hyperref}
\usepackage{indentfirst}

\hypersetup{colorlinks = true, linkcolor = black, filecolor = black, urlcolor = black, citecolor = black}

\makeatletter

    \renewcommand{\l@section}{\@tocline{1}{0pt}{1.5pc}{5pc}{}}
    \renewcommand{\l@subsection}{\@tocline{2}{0pt}{3.4pc}{0pc}{}}

    \newtheoremstyle{italicremark}
    {}{}
    {}
    {}
    {\itshape}
    {.}
    { }
    {\thmname{#1}\thmnumber{ #2}\thmnote{ (#3)}}
\makeatother

\newcounter{proofstep}
\AtBeginEnvironment{proof}{\setcounter{proofstep}0}

\newcounter{proofcase}
\AtBeginEnvironment{proof}{\setcounter{proofcase}0}

\newtheorem{theorem}{Theorem}[section]

\newtheorem{corollary}[theorem]{Corollary}
\newtheorem{lemma}[theorem]{Lemma}

\theoremstyle{definition}

\theoremstyle{italicremark}
\newtheorem{remark}[theorem]{\textit{Remark}}

\DeclareMathOperator{\intt}{int}

\DeclareMathOperator{\Ric}{Ric}

\newcommand{\Rbb}{\mathbb{R}}

\numberwithin{equation}{section}

\title[Topology and Dirichlet spectrum of free boundary surfaces]{Topology and Dirichlet spectrum of free boundary minimal and CMC surfaces}
\author[Alcides de Carvalho and Roney Santos]{Alcides de Carvalho and Roney Santos}

\address{Alcides de Carvalho \newline
\indent \normalfont Departamento de Matem\'atica \newline
\indent Universidade Federal de Pernambuco \newline
\indent Av. Jornalista An\'ibal Fernandes, 50740-560, Recife-PE, Brazil \newline
\indent \texttt{alcides.junior@ufpe.br}}

\address{Roney Santos \newline
\indent \normalfont Department of Mathematics \newline
\indent King's College London \newline
\indent Strand, WC2R 2LS, London, United Kingdom \newline
\indent \texttt{roney.santos@kcl.ac.uk}}
	
\keywords{Minimal surfaces, free boundary problems, Morse index, eigenvalue estimate}

\begin{document}

\begin{abstract}
    We find bounds for the genus and for the number of boundary components of a surface that is either index one as a free boundary minimal surface or stable as a free boundary constant mean curvature surface in a Riemannian three-manifold with convex boundary and non-negative Ricci curvature, provided the first Dirichlet eigenvalue of its Jacobi operator is non-negative. As an application, we establish strong topological restrictions on such surfaces in compact strictly convex domains of three-dimensional normal homogeneous spaces.
\end{abstract}

\maketitle

\section{Introduction}

Let $M$ be a Riemannian three-manifold with non-empty smooth boundary $\partial M$, and $\Sigma$ be a compact properly immersed surface in $M$ with non-empty smooth boundary $\partial \Sigma$. We say that $\Sigma$ is a \emph{free boundary minimal surface} if it is a critical point of the area functional among variations for which the boundary of $\Sigma$ is allowed to move along $\partial M$. Similarly, $\Sigma$ is a \emph{free boundary constant mean curvature surface} if it is a critical point of the same functional under a volume constraint.\smallskip

When the surface is two-sided, stability provides a natural link between the variational problem, the ambient geometry, and the topology of the surface. Hence, a substantial part of this theory has been devoted to understanding how curvature assumptions on the ambient manifold restrict the topology and geometry of free boundary minimal or constant mean curvature surfaces with low index or satisfying suitable stability conditions. For a possibly non-exhaustive list of works, see \cite{fraser2015Riemannsurfaces, barbosa2016stable, ambrozio2015rigidity, ambrozio2018index, medvedev2025geodesicballs, lima2026eigenvalue, sargent2017index, antonia2026topology, cavalcante2020index, mendes2018rigidity, ros1997capillary, ros2008stability, ros1995stability, nunes2017stable, wang2019uniqueness, batista2026first, tran2020index, barbosa2020liegroup} and the references therein.\smallskip

Our purpose in this paper is to relate the topology of a compact properly immersed surface with boundary and the Dirichlet spectrum of its Jacobi operator. More precisely, if $\mu_1(\Sigma)$ denotes the first Dirichlet eigenvalue of the Jacobi operator of a surface $\Sigma$ that is either minimal or constant mean curvature in a given Riemannian three-manifold, we prove the following.

\begin{theorem}\label{thm:main1}
    Let $\Sigma$ be a connected, two-sided and orientable surface that is either index one as a free boundary minimal surface or stable as a free boundary constant mean curvature surface in a Riemannian three-manifold $M$ with convex boundary and non-negative Ricci curvature. If $\mu_1(\Sigma) \geq 0$, then
    \begin{itemize}
        \item[$\mathrm{(i)}$] either $\Sigma$ has at most genus one and $\partial \Sigma$ has at most three components;
        \item[$\mathrm{(ii)}$] or $\Sigma$ is a minimal surface of genus two and $\partial \Sigma$ is a connected curve along which the second fundamental form of $\partial M$ vanishes.
    \end{itemize}
\end{theorem}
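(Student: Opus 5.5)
We will use the Ros–Vergasta / Hersch-type test function argument, built from harmonic vector fields (or conformal maps) on $\Sigma$, together with the Dirichlet hypothesis $\mu_1(\Sigma)\ge 0$. Write the index form
\[
Q(u,u)=\int_\Sigma |\nabla u|^2-(\Ric(N,N)+|A|^2)u^2\,d\Sigma-\int_{\partial\Sigma}\mathrm{II}(N,N)u^2\,ds .
\]
We work on the space $\mathcal H$ of harmonic one-forms $\omega$ on $\Sigma$ satisfying the absolute (Neumann-type) boundary condition $\omega(\nu)=0$ along $\partial\Sigma$. By Hodge–Morrey theory, $\dim\mathcal H=2g+r-1$, where $g$ is the genus and $r$ the number of boundary components.

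For each $\omega\in\mathcal H$, let $X$ be the tangent vector field dual to $\omega$ and set $u=\langle X\times N\rangle$-type components. Concretely, use the vector field $\star X=N\times X$ viewed in the ambient space, and take as test functions the pairings of $X$ with a parallel frame. Since $M$ is a general manifold, the better choice is to use the harmonic vector field $X$ itself as a variational vector field via the Ros formula:
\[
Q(X\cdot,\,X\cdot)\ \text{summed}=-\int_\Sigma(\Ric(N,N)+\ldots)|X|^2-\int_{\partial\Sigma}\mathrm{II}(\ldots),
\]
where the boundary term has the correct sign because $\omega(\nu)=0$ makes $X$ tangent to $\partial M$ along $\partial\Sigma$ and $\partial M$ is convex. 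This yields $Q\le 0$ on a subspace of dimension at least $2g+r-1$ (or of its image under $\omega\mapsto\star\omega$, which interchanges relative and absolute conditions). Equality cases force $\Ric(N,N)=0$, $\mathrm{II}=0$ along $\partial\Sigma$, and related rigidity.

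The next step is to count. In the minimal index-one case, the negative space of $Q$ has dimension one. In the stable CMC case, stability holds only on mean-zero functions, which costs one dimension. The new ingredient is the hypothesis $\mu_1\ge0$. It implies that any nonzero function on which $Q\le0$ cannot vanish on $\partial\Sigma$ unless it is a Dirichlet eigenfunction with eigenvalue $0$. Hence imposing vanishing-type constraints on the test space is ``free.'' More precisely, the null-and-negative space of $Q$ intersects $H^1_0$ trivially up to the kernel, so its dimension is bounded by $1$ plus the multiplicity of the boundary contribution. Combining the two constraints gives an inequality of the form $2g+r-1\le 4$ (or $\le 3+\cdots$). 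The case analysis then runs as follows. For $g\le1$, the bound gives $r\le3$. The case $g=2,\ r=1$ survives only under equality. In that case, the equality analysis forces $\mathrm{II}(X,X)=0$ along $\partial\Sigma$ for enough $X$ to conclude that the second fundamental form of $\partial M$ vanishes there. It also forces $H=0$, since in the CMC case one more dimension is lost, excluding genus two.

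The main obstacle is making the dimension count sharp, namely pinning down exactly how $\mu_1\ge0$ and the index-one or stability condition combine to bound $\dim\mathcal H$ by $4$. I would first attempt a unique-continuation argument. Suppose a linear combination of test functions has $Q\le0$ and is $Q$-orthogonal to the first Neumann-type eigenfunction. Then it must be a Jacobi field, and a further combination vanishing on a boundary arc would contradict $\mu_1\ge0$ unless it is identically zero. Handling the equality case (ii) rigorously is the secondary difficulty.
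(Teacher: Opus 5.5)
There is a genuine gap, and you name it yourself: nothing in the proposal turns $\mu_1(\Sigma)\ge 0$ into usable information, and the test functions are not well defined.

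First, the test functions. In a general Riemannian three-manifold there is no parallel frame to pair $X$ or $N\times X$ with; Ros-type arguments need a Euclidean structure. A harmonic field $X$ tangent to $\Sigma$ is also not a variation of the form $uN$, so it gives no function on which $Q_\Sigma$ can be evaluated. The displayed ``Ros formula'' is therefore not an identity.

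More fundamentally, $\mu_1(\Sigma)\ge 0$ only says that $Q_\Sigma\ge 0$ on $H^1_0(\Sigma)$, and vanishing on $\partial\Sigma$ is infinitely many linear conditions. A finite-dimensional space of harmonic-form test functions will in general meet $H^1_0(\Sigma)$ only in $0$. So the Dirichlet hypothesis cannot be converted into a codimension count on $\mathcal H$, and imposing such constraints is not ``free.'' The inequality $2g+r-1\le 4$ is never derived. Even granting it, it allows $g=0$ with $r=4$ or $5$, so it would not yield alternative (i). The genus-two conclusions $H_\Sigma=0$ and $A_{\partial M}=0$ along $\partial\Sigma$ are only asserted.

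The missing idea is to apply $\mu_1$ to one test function that vanishes on $\partial\Sigma$ by construction. Take an Ahlfors--Gabard proper conformal branched cover $F=(f_1,f_2,f_3):\Sigma\to S^2_+$ of degree $d\le g+b$ with $F(\partial\Sigma)\subset\partial S^2_+$.
\begin{itemize}
\item The height $f_3$ is positive in the interior, so it can never be made orthogonal to the positive Robin eigenfunction $\rho_1$ or to constants. But it lies in $H^1_0(\Sigma)$, so $Q_\Sigma(f_3)\ge 0$ by $\mu_1(\Sigma)\ge 0$.
\item The two horizontal coordinates can be balanced by a Hersch-type argument using the conformal automorphisms of $S^2_+$: against $\rho_1$ in the index one case, or against $1$ in the stable CMC case. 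This gives $Q_\Sigma(f_1),Q_\Sigma(f_2)\ge 0$.
\end{itemize}
Sum the three inequalities and use $\sum f_i^2=1$ and $\int_\Sigma|\nabla F|^2=4\pi d$. Then apply the Gauss equation with $\Ric\ge 0$, and Gauss--Bonnet with geodesic curvature $A_{\partial M}(T,T)$. This gives
\[\int_\Sigma H_\Sigma^2\,dA+\int_{\partial\Sigma}\big(A_{\partial M}(N,N)+2A_{\partial M}(T,T)\big)\,ds\le 4\pi(2-g).\]
Convexity of $\partial M$ then gives $g\le 2$. For $g=2$ every term must vanish, so $H_\Sigma\equiv 0$ and $A_{\partial M}(N,N)=A_{\partial M}(T,T)=0$. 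Since $A_{\partial M}\ge 0$ and $N,T$ span $T\partial M$ along $\partial\Sigma$, this forces $A_{\partial M}=0$ there.

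The bound on $b$ does not use $\mu_1$ at all. Once $g\le 2$ is known, the paper takes it from item (i) of Theorem 1.2 of Chen--Fraser--Pang, whose argument extends to stable CMC surfaces by replacing the Robin eigenfunction with $1$. So your dimension count is aimed at the wrong target: the Dirichlet hypothesis controls the genus, through a single vertical coordinate, rather than entering a count of harmonic forms.
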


By slightly strengthening at least one of the geometric or spectral hypotheses in Theorem \ref{thm:main1}, we are able to rule out its second alternative. Moreover, by using an abstract generalization of a Nunes result \cite{nunes2017stable}, we are able to show that a surface as in Theorem \ref{thm:main1} cannot have genus two in a strictly convex domain of a normal homogeneous space.

\begin{theorem}\label{thm:main2}
    Let $\Sigma$ be a connected, two-sided and orientable surface that is either index one as a free boundary minimal surface or stable as a free boundary constant mean curvature surface in a compact domain $\Omega$ of a three-dimensional normal homogeneous space. Suppose $\partial \Omega$ is strictly convex. Then $\Sigma$ has at most genus one and $\partial \Sigma$ has at most three components.
\end{theorem}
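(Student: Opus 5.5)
The plan is to deduce Theorem \ref{thm:main2} from Theorem \ref{thm:main1}. Three things have to be checked: the curvature hypothesis, the spectral hypothesis $\mu_1(\Sigma)\ge 0$, and the exclusion of alternative (ii).

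\textbf{Ricci curvature.} A normal homogeneous metric has non-negative sectional curvature. This follows from the O'Neill formula for the Riemannian submersion $G \to G/H$, where $G$ carries a bi-invariant metric. Hence $\Ric \ge 0$ on the normal homogeneous space, and so on $\Omega$. Since $\partial\Omega$ is strictly convex, it is in particular convex. Thus all geometric hypotheses of Theorem \ref{thm:main1} hold with $M=\Omega$.

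\textbf{Excluding alternative (ii).} Alternative (ii) requires the second fundamental form of $\partial\Omega$ to vanish along the non-empty curve $\partial\Sigma$. This is impossible when $\partial\Omega$ is strictly convex. So once $\mu_1(\Sigma)\ge 0$ is established, only alternative (i) remains, which is the conclusion of Theorem \ref{thm:main2}.

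\textbf{The spectral hypothesis (main obstacle).} I would prove $\mu_1(\Sigma)\ge 0$ through the abstract version of Nunes' result mentioned before the statement. I expect this to be the hardest step. The plan is as follows.

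\begin{enumerate}
\item \emph{Supply of Jacobi functions.} The isometry group acts transitively, so the space $\mathcal K$ of Killing fields has dimension at least $3$. For $K\in\mathcal K$, the function $u_K=\langle K,N\rangle$ satisfies $Lu_K=0$. In the constant mean curvature case, the flow of $K$ preserves volume, so $\int_\Sigma u_K=0$. Hence $u_K$ is admissible for the volume-constrained stability problem.
\item \emph{Reduction to a contradiction.} Suppose $\mu_1(\Sigma)<0$, and let $\varphi>0$ be the first Dirichlet eigenfunction. Extending $\varphi$ by zero gives an admissible test function with $Q(\varphi,\varphi)<0$, since Dirichlet functions kill the boundary term of the index form. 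The aim is to produce a second direction, built from the $u_K$, that together with $\varphi$ spans a space on which $Q$ is negative definite. This would contradict index one in the minimal case, or stability in the constant mean curvature case (after restricting to mean-zero functions there).
\item \emph{Choice of $K$.} Using $\dim\mathcal K\ge 3$, I would choose $K\ne 0$ so that $u_K$ has a nodal domain $D$ with $\overline D\cap\partial\Sigma=\emptyset$. One way is to impose linear conditions forcing $u_K$ to vanish to high order at an interior point; another is to make $K$ tangent to $\partial\Omega$ along part of $\partial\Sigma$.
\item \emph{Building the second direction.} Then $\mu_1(D)=0$. The functions $u_K\chi_D$ and $\varphi$ give $Q\le 0$ on their span. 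Strict negativity should come from domain monotonicity applied to $D\subsetneq\Sigma$, together with unique continuation, which prevents $u_K\chi_D$ from being a Jacobi field on all of $\Sigma$. In the constant mean curvature case, I would first adjust the combination so that it has mean zero.
\end{enumerate}

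The delicate points are both in the choice of $K$: guaranteeing a nodal domain of $u_K$ away from $\partial\Sigma$, or alternatively controlling the boundary term $\int_{\partial\Sigma}(u_K\partial_\nu u_K - \mathrm{II}(N,N)u_K^2)$ when $K$ is not tangent to $\partial\Omega$. Here strict convexity should again give the favourable sign. If the nodal-domain route fails, I would fall back on this boundary-term route, choosing $K$ to make the boundary contribution non-positive.
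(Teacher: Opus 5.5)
Your two reductions are fine: non-negative Ricci curvature via O'Neill, and excluding alternative (ii) by strict convexity. The gap is in the spectral step, which carries the whole theorem, and your main route there cannot work.

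\textbf{Step 3 asks for something that never exists.} You want a Killing field $K$ such that $u_K$ has a nodal domain $D$ with $\overline{D}\cap\partial\Sigma=\emptyset$. Any such $D$ has $\mu_1(D)=0$, so strict domain monotonicity gives $\mu_1(\Sigma)<0$ outright. You choose $K$ without using the contradiction hypothesis, so step 3 amounts to proving the negation of what you want. In fact, under the hypotheses of the theorem one has $\mu_1(\Sigma)>0$, so no such $K$ exists.

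\textbf{Step 4 also fails on its own terms.} Put $w=u_K\chi_D$. Then $Q_\Sigma(w)=0$, but by \eqref{eq:indexform} $Q_\Sigma(\varphi,w)=\mu_1(\Sigma)\int_D\varphi\,u_K\,dA\neq0$. So the Gram matrix of $Q_\Sigma$ on $\mathrm{span}\{\varphi,w\}$ has negative determinant, and $Q_\Sigma$ is indefinite there, not $\leq0$.

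\textbf{The CMC claim in step 1 is false.} You assert $\int_\Sigma u_K\,dA=0$. But the flow of $K$ does not preserve $\Omega$, so it is not an admissible variation. For a spherical cap in a Euclidean ball and the translation along its axis, $u_K$ has a strict sign. Had the claim been true, stability would give $Q_\Sigma(u_K)\geq0$ for every $K$, and your fallback could never succeed.

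\textbf{The fallback is the right idea, but both key ingredients are missing.} The boundary-term route is essentially the paper's approach.

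First, you need a Jacobi field with $Q_\Sigma(\phi)<0$ strictly; non-positive is not enough, and you give no way to find one. The paper takes an orthonormal basis $\{E_\alpha\}$ of $\mathfrak g$ for the $\operatorname{Ad}(G)$-invariant inner product. The fundamental fields then satisfy the Parseval identity $\sum_\alpha\langle X_\alpha,v\rangle^2=|v|^2$; this is where normality, not mere homogeneity, enters. Hence $\phi_\alpha=\langle X_\alpha,N\rangle$ satisfy $\sum_\alpha\phi_\alpha^2\equiv1$. This kills $\sum_\alpha\phi_\alpha\nu(\phi_\alpha)$ and gives $\sum_\alpha Q_\Sigma(\phi_\alpha)=-\int_{\partial\Sigma}A_{\partial\Omega}(N,N)\,ds<0$.

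Second, for $\psi\in H^1_0(\Sigma)$ and $L_\Sigma\phi=0$, \eqref{eq:indexform} gives $Q_\Sigma(\psi,\phi)=0$. So with $\psi_1$ the first Dirichlet eigenfunction, $Q_\Sigma(a\psi_1+b\phi)=a^2Q_\Sigma(\psi_1)+b^2Q_\Sigma(\phi)$. Let $\rho_1>0$ be the first Robin eigenfunction, and suppose $\mu_1(\Sigma)\leq0$. Index one forces $\int_\Sigma\rho_1\phi\,dA\neq0$; in the CMC case stability forces $\int_\Sigma\phi\,dA\neq0$, the opposite of your step 1. After rescaling $\phi$, the function $\psi_1-\phi$ is $L^2$-orthogonal to $\rho_1$ (resp.\ has mean zero) and has negative index form, which is a contradiction. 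This is the content of Lemmas \ref{lem:nunes-tran1} and \ref{lem:nunes-tran2}. It yields $\mu_1(\Sigma)>0$, and Theorem \ref{thm:main1} then finishes the proof.
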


Our Theorem \ref{thm:main2} extends the main result of \cite{barbosa2020liegroup} in two directions. First, the ambient space may be any three-dimensional normal homogeneous space, instead of a Lie group endowed with a bi-invariant metric, including for instance the product spaces $S^2 \times \Rbb$ and $S^2 \times S^1$. Second, besides stable free boundary constant mean curvature surfaces, the result also applies to free boundary minimal surfaces of index one. Therefore, the same topological restrictions hold in a more general geometric and variational setting.\smallskip

Other works have also obtained topological and rigidity results by using spectral conditions. For instance, Batista-Cavalcante-Melo \cite{batista2026first} considered Robin and Jacobi-Steklov spectral assumptions, while Antonia-Cavalcante-Souza \cite{antonia2026topology} obtained topological estimates for stable free boundary constant mean curvature surfaces under lower Ricci curvature bounds through a constrained Robin spectral problem. In contrast, our approach uses directly the Dirichlet spectrum of the Jacobi operator, includes the borderline case of a vanishing first Dirichlet eigenvalue, and gives restrictions for the genus and the number of boundary components of both index one free boundary minimal surfaces and stable free boundary constant mean curvature surfaces under convexity of the boundary and non-negative Ricci curvature.

\subsubsection*{LLM disclosure}

Initially, we intended to apply Theorem \ref{thm:main1} to strictly convex domains of simply connected space forms of non-negative sectional curvature. But ChatGPT pointed out that normal homogeneous spaces have the properties needed for the argument, which led us to Theorem \ref{thm:main2}. We also used ChatGPT to revise some definitions and results, as well as to review the manuscript.

\subsubsection*{Acknowledgments}

R. Santos was supported by FAPESP grant 2025/03506-0 and CNPq grant 409513/2023-7.

\section{Proof of Theorem \ref{thm:main1}}

Let $M$ be a Riemannian three-manifold and $\Sigma$ an immersed surface in $M$. Consider a local unit normal vector field $N$ along $\Sigma$. We define the second fundamental form of $\Sigma$ as $A_\Sigma(X, Y) = -\langle \nabla_XN, Y\rangle$, where $X$ and $Y$ are vector fields tangent to $\Sigma$, and its mean curvature as $H_\Sigma = \operatorname{tr}(A_\Sigma)$. The surface $\Sigma$ is \emph{minimal} if $H_\Sigma$ vanishes identically, and \emph{constant mean curvature} if $H_\Sigma$ is constant along it. If $\partial M$ is non-empty and smooth, it is a surface in $M$ that we call \emph{convex} if $A_{\partial M} \geq 0$ with respect to its inward unit normal, and \emph{strictly convex} if the inequality is strict.\smallskip

We aim to consider a connected and compact properly immersed surface $\Sigma$ whose non-empty smooth boundary $\partial \Sigma$ meets $\partial M$ orthogonally. It is called a \emph{free boundary minimal surface} if $H_\Sigma$ vanishes, and a \emph{free boundary constant mean curvature surface} if $H_\Sigma$ is constant. The first variation formula shows that they are critical points of the area functional among variations that keep $\partial \Sigma$ in $\partial M$, in the second case under a volume constraint.\smallskip

This means, in particular, that one can consider only variational vector fields of the form $X = fN$, where $f \in C^\infty(\Sigma)$ and $N|_{\partial \Sigma}$ is tangent to $\partial M$. The second variation formula, then, defines the quadratic form
\[Q_\Sigma(f) = \int_\Sigma\big(|\nabla f|^2 - (\Ric(N, N) + |A_\Sigma|^2)f^2\big)\, dA - \int_{\partial \Sigma} A_{\partial M}(N, N)f^2\, ds,\]
where $\Ric$ is the Ricci curvature of $M$, with associated symmetric bilinear form
\[Q_\Sigma(f, h) = \int_\Sigma \big(\langle\nabla f, \nabla h\rangle - (\Ric(N,N) + |A_\Sigma|^2)fh\big)\, dA - \int_{\partial\Sigma} A_{\partial M}(N,N) fh\, ds.\]
Integration by parts gives that
\begin{equation}\label{eq:indexform}
    Q_\Sigma(f, h) = -\int_\Sigma h\, L_\Sigma(f)\, dA + \int_{\partial \Sigma} h \big(\nu(f) - A_{\partial M}(N, N)f\big)\, ds,
\end{equation}
where $L_\Sigma = \Delta + \Ric(N, N) + |A_\Sigma|^2$ is the \emph{Jacobi operator} of $\Sigma$. Since $\Sigma$ is compact and $L_\Sigma$ is a second-order elliptic operator with smooth coefficients, the \emph{Robin eigenvalue problem}
\[\left\{
    \begin{aligned}
        L_\Sigma(f) + \lambda f &= 0\,\mbox{ in }\,\Sigma\\
        \nu(f) - A_{\partial M}(N, N)f &= 0\,\mbox{ on }\,\partial\Sigma.
    \end{aligned}
\right.\]
defines a self-adjoint elliptic problem with compact resolvent. Hence its spectrum is real and discrete, with eigenvalues $\lambda_1(\Sigma) \leq \lambda_2(\Sigma) \leq \ldots \lambda_k(\Sigma) \to +\infty$. In particular, it has only finitely many negative eigenvalues. The \emph{index} of $\Sigma$ is the number of negative eigenvalues, counted with multiplicities, of this Robin problem. Moreover, it is well-known that the \emph{first Robin eigenfunction} is strictly positive.\smallskip

If $\Sigma$ has constant mean curvature, the volume constraint implies that $f$ has mean value zero. Hence, in that case, the surface $\Sigma$ is \emph{stable} if and only if $Q_\Sigma(f) \geq 0$ for all $f \in C^\infty(\Sigma)$ with zero mean value.\smallskip

In order to use the stability condition, we need a suitable class of test functions, as the ones used by Nunes \cite{nunes2017stable} obtained from the Ahlfors-Gabard theorem \cite{ahlfors1950open, gabard2006representation} with a Hersch type balancing argument \cite{hersch1970quatre}. Since we could not find such a statement, we include it here together with a verification for the sake of completeness.

\begin{lemma}\label{lem:hersch}
    Let $\Sigma$ be a compact smooth surface with smooth boundary $\partial \Sigma$ and $F: \Sigma \to S^2_+$ be a smooth map with $F(\intt(\Sigma)) \subset \intt(S^2_+)$ and $F(\partial \Sigma) \subset \partial S^2_+$. Let $\rho: \Sigma \to \Rbb$ be a positive continuous function. Then there exists a conformal diffeomorphism $T: S^2_+ \to S^2_+$ such that, if $T \circ F = (f_1, f_2, f_3)$, one has
    \[\int_\Sigma \rho f_1\, dA = \int_\Sigma \rho f_2\, dA = 0\]
    and $f_3 = 0$ along $\partial \Sigma$.
\end{lemma}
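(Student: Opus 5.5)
The plan is a Hersch-type balancing argument carried out on the hemisphere. We identify $S^2_+$ conformally with the closed unit disk $\overline{D}\subset\mathbb{C}$ through stereographic projection from the south pole. The conformal diffeomorphisms of $S^2_+$ that preserve the boundary great circle then correspond to the disk automorphisms, in particular to the hyperbolic translations
\[\varphi_a(z) = \frac{z + a}{1 + \bar a z}, \qquad a \in D.\]
Every such $T$ maps $\partial S^2_+$ onto itself. Hence, writing $T\circ F=(f_1,f_2,f_3)$, we automatically have $f_3=0$ on $\partial\Sigma$, since $F(\partial\Sigma)\subset\partial S^2_+$. Only the two balancing conditions remain to be arranged. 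In disk coordinates, $(f_1,f_2)$ is a fixed smooth function of $\varphi_a(G)$, where $G$ is $F$ read in the disk. Explicitly, $(f_1,f_2) = \frac{2w}{1+|w|^2}$ with $w=\varphi_a(G)$, which is a continuous map $\pi:\overline{D}\to\overline{D}$ fixing $\partial D$ pointwise and preserving rays.

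Next we define the center-of-mass map $\Phi: D \to \Rbb^2$ by
\[\Phi(a) = \frac{\int_\Sigma \rho\, \pi(\varphi_a\circ G)\, dA}{\int_\Sigma \rho\, dA}.\]
This map is continuous in $a$, since everything is continuous and $\Sigma$ is compact. Its values lie in the open disk $D$, because $\pi(\varphi_a\circ G)$ takes values in $\overline D$ and lies in $D$ on $\intt(\Sigma)$, which has positive measure. We need a zero of $\Phi$. The standard route is to show that $\Phi$ extends continuously to $\overline D$ with $\Phi(\xi)=\xi$ for $\xi\in\partial D$. Then $\Phi$ is a continuous self-map of $\overline D$ homotopic to the identity on the boundary, so a degree (or Brouwer) argument gives $a$ with $\Phi(a)=0$, and $T=\varphi_a$ works.

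The main obstacle is the boundary behavior. As $a\to\xi\in\partial D$, the map $\varphi_a$ converges to the constant $\xi$ uniformly on compact subsets of $\overline D\setminus\{-\xi\}$. So $\pi(\varphi_a\circ G)\to \xi$ pointwise off the set $G^{-1}(-\xi)$. To apply dominated convergence, whose integrands are bounded by $1$ times $\rho$, we need $G^{-1}(-\xi)$ to have measure zero in $\Sigma$. This holds because $-\xi\in\partial D$ and $G^{-1}(\partial D)=\partial\Sigma$ by the hypothesis $F(\intt\Sigma)\subset\intt S^2_+$, and $\partial\Sigma$ has measure zero. We also need convergence uniform in $\xi$, to get continuity of the extension on $\overline D$. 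For this we will argue as follows.

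Let $a_n\to\xi$ with $a_n\in\overline D$; the case $a_n\in\partial D$ is given by the definition $\Phi=\mathrm{id}$ there. For the interior points, note that on the compact set $\Sigma_\delta=\{x: |G(x)|\le 1-\delta\}$ the convergence $\varphi_{a_n}\circ G\to\xi$ is uniform. The remainder $\Sigma\setminus\Sigma_\delta$ is a collar of $\partial\Sigma$ whose weighted area tends to $0$ as $\delta\to0$. This gives $\Phi(a_n)\to\xi$, and hence continuity. Finally we apply the degree argument. If $\Phi$ had no zero, then $\Phi/|\Phi|$ would be a retraction-type map $\overline D\to S^1$ restricting to the identity on $S^1$, which is impossible. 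This produces the required $T$.
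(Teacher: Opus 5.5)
Your proof is correct and follows essentially the same route as the paper. The paper's Hersch maps $T_v$, with $v$ in the equatorial disk, are exactly your hyperbolic translations $\varphi_a$ with $a=v$, read through stereographic projection. Like you, the paper gets the boundary limit by dominated convergence, using that the exceptional preimage of $-\xi$ lies in $\partial\Sigma$, and concludes with the no-retraction theorem.

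Your extra collar argument for continuity of the extension is valid but not needed. The pointwise convergence $\varphi_{a_n}(z)\to\xi$ for $z\neq-\xi$ holds along any sequence $a_n\to\xi$, so dominated convergence alone already gives $\Phi(a_n)\to\xi$.
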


\begin{proof}
    Write points of $S^2$ as $y = (x, x_3)$, with $x \in D^2$, where $D^2$ is the unit disk centered at the origin of $\Rbb^2$. For each $v \in \intt(D^2)$, define the conformal transformation $T_v: S^2 \to S^2$ by
    \[T_v(x,x_3) = \left(\frac{(1 - |v|^2)x + 2(1 + \langle v, x\rangle)v}{1 + 2\langle v, x\rangle + |v|^2}, \frac{(1 - |v|^2)x_3}{1 + 2\langle v, x\rangle + |v|^2}\right).\]
    This is the restriction to $S^2$ of a Möbius transformation. Since the last coordinate of $T_v$ is non-negative for $x_3 \geq 0$, we get $T_v(S^2_+) \subset S^2_+$. Since $T_v^{-1} = T_{-v}$ satisfies the same property, it follows that $T_v(S^2_+) = S^2_+$. Define
    \[Y(v) = \int_\Sigma \rho \cdot (f_1^v, f_2^v)\, dA,\]
    where $T_v \circ F = (f_1^v, f_2^v, f_3^v)$. Suppose $Y(v) \neq 0$ for all $v \in \intt(D^2)$. Then
    \[G: \intt(D^2) \to S^1,\quad G(v) = \frac{Y(v)}{|Y(v)|},\]
    is continuous. Let $v_0 \in S^1$. Since $|v_0| = 1$, as $v \to v_0$, the Möbius transformation $T_v$ converges pointwise, away from the antipodal point $(-v_0, 0)$, to the constant map $y \mapsto (v_0, 0)$. Since $F$ is proper, $ F^{-1}((-v_0,0)) \subset \partial\Sigma$ has zero area measure, and thus the convergence holds almost everywhere on $\Sigma$. Moreover, $\rho \cdot \big((f_1^v)^2 + (f_2^v)^2\big) \leq \rho$, and $\rho$ is integrable because it is continuous and $\Sigma$ is compact. Thus, the dominated convergence theorem gives
    \[\lim_{v \to v_0}Y(v) = \left(\int_\Sigma \rho\, dA\right)v_0.\]
    As a consequence, one finds a continuous extension $\widehat{G}: D^2 \to S^1$ of $G$ such that $\widehat{G}|_{S^1}$ is the identity map of $S^1$, which is a contradiction. We, thus, conclude that there exists $v \in D^2$ such that $Y(v) = 0$. Taking $T = T_v|_{S^2_+}$ we obtain
    \[\int_\Sigma \rho f_1^v\, dA = \int_\Sigma \rho f_2^v\, dA = 0.\]
    Finally, since $F(\partial \Sigma) \subset \partial S^2_+$, we have that $f_3 = 0$ along $\partial \Sigma$.
\end{proof}

We consider the \emph{Dirichlet eigenvalue problem} for $L_\Sigma$, which reads as
\[\left\{
\begin{aligned}
    L_\Sigma(f) + \mu f &= 0\,\mbox{ in }\,\Sigma\\
    f &= 0\,\mbox{ on }\,\partial\Sigma.
\end{aligned}
\right.\]
Its first eigenvalue is then given by
\[\mu_1(\Sigma) = \inf_{0 \neq f \in H^1_0(\Sigma)} \frac{Q_\Sigma(f)}{\int_\Sigma f^2\, dA},\]
where
\[H_0^1(\Sigma) = \{f\in H^1(\Sigma):\, f = 0 \text{ on }\partial\Sigma\}\]
and
\[H^1(\Sigma) = \{f\in L^2(\Sigma):\, \nabla f\in L^2(T\Sigma)\}.\]
Note that $\mu_1(\Sigma) \geq 0$ if and only if $Q_\Sigma(f) \geq 0$ for all $f \in H^1_0(\Sigma)$.\smallskip

In the results below, we denote by $g(\Sigma)$ the genus of $\Sigma$ and by $b(\partial \Sigma)$ the number of connected components of $\partial \Sigma$.

\begin{theorem}
    Let $\Sigma$ be a connected, two-sided and orientable surface that is either index one as a free boundary minimal surface or stable as a free boundary constant mean curvature surface in a Riemannian three-manifold $M$ with convex boundary and non-negative Ricci curvature. If $\mu_1(\Sigma) \geq 0$, then
    \begin{itemize}
        \item[$\mathrm{(i)}$] either $g(\Sigma) \leq 1$ and $b(\partial \Sigma) \leq 3$;
        \item[$\mathrm{(ii)}$] or $g(\Sigma) = 2$, $b(\partial \Sigma) = 1$, $H_\Sigma = 0$ and $A_{\partial M} = 0$ along $\partial \Sigma$.
    \end{itemize}
\end{theorem}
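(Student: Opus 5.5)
The plan follows Nunes \cite{nunes2017stable} and Ros--Vergasta. By the Ahlfors--Gabard theorem \cite{ahlfors1950open, gabard2006representation}, there is a proper conformal branched cover $F: \Sigma \to S^2_+$ with $F(\partial\Sigma) \subset \partial S^2_+$ and degree
\[d \leq g(\Sigma) + b(\partial\Sigma).\]

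\emph{Balancing and orthogonality.} I apply Lemma \ref{lem:hersch} to $F$, with a weight $\rho$ chosen according to the case.
\begin{itemize}
\item[(a)] \emph{Minimal, index one:} take $\rho = \varphi_1$, the positive first Robin eigenfunction. Then $f_1, f_2$ are $L^2$-orthogonal to $\varphi_1$.
\item[(b)] \emph{CMC, stable:} take $\rho \equiv 1$. Then $f_1, f_2$ have zero mean.
\end{itemize}
In both cases $Q_\Sigma(f_i) \geq 0$ for $i=1,2$: in (a) by the variational characterization of $\lambda_2 \geq 0$, in (b) by stability.

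The third coordinate $f_3$ vanishes on $\partial\Sigma$, so $f_3 \in H^1_0(\Sigma)$. The hypothesis $\mu_1(\Sigma) \geq 0$ then gives $Q_\Sigma(f_3) \geq 0$. This is the only place the Dirichlet assumption enters, and it replaces the missing third balancing condition.

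\emph{Summing the three inequalities.} Using $f_1^2+f_2^2+f_3^2 = 1$ and conformal invariance of the Dirichlet energy,
\[\sum_i \int_\Sigma |\nabla f_i|^2\, dA = 2\,\mathrm{Area}(F) = 4\pi d .\]
Adding the inequalities therefore yields
\[\int_\Sigma \big(\Ric(N,N) + |A_\Sigma|^2\big)\, dA + \int_{\partial\Sigma} A_{\partial M}(N,N)\, ds \leq 4\pi d.\]
This alone is not enough to control the topology.

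\emph{Combining with Gauss--Bonnet.} I would split $|A_\Sigma|^2 = \tfrac12 H^2 + |\mathring{A}|^2$ and use the Gauss equation to write
\[\Ric(N,N) + |A_\Sigma|^2 = \tfrac12\big(\mathrm{Scal}_M + H^2 + |A_\Sigma|^2\big) - K_\Sigma .\]
On $\partial\Sigma$ the free boundary condition gives $k_g = A_{\partial M}(T,T)$, where $T$ is the unit tangent to $\partial\Sigma$. Hence the combination $\Ric(N,N)+|A_\Sigma|^2$ together with $A_{\partial M}(N,N)$ produces $-K_\Sigma$ and $-k_g$ terms, up to non-negative remainders.

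The difficulty is that $\mathrm{Scal}_M$ and $A_{\partial M}(T,T)$ are not directly controlled by $\Ric \geq 0$ and $A_{\partial M}(N,N)$. The standard remedy is to repeat the argument over the family of conformal maps obtained by composing with rotations of $S^2_+$ about the vertical axis, or to use a second test configuration. Averaging over these choices should produce the full trace $\Ric(e_1,e_1)+\Ric(e_2,e_2)+\Ric(N,N)$ and the full boundary term $H_{\partial M} = A_{\partial M}(N,N) + A_{\partial M}(T,T)$. The aim is an inequality of the form
\[-2\pi\chi(\Sigma) + (\text{non-negative terms}) \leq 4\pi d ,\]
that is,
\[2g + b - 2 \leq c\,(g + b)\]
with an explicit constant $c$. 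I expect obtaining the right constant here to be the main obstacle, since it is what makes the case analysis close. An alternative is Nunes' trick of also using $1$ or $\varphi_1$ together with the coordinates in some combination.

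\emph{Case analysis and equality.} Once the inequality is established, a direct integer case check should leave only genus $\leq 1$ with $b \leq 3$, or the borderline case $g=2$, $b=1$. In the borderline case every inequality used must be an equality. This forces $H_\Sigma = 0$, since the $H^2$ term must vanish, and $A_{\partial M} = 0$ along $\partial\Sigma$, since the boundary term must vanish. That gives alternative (ii).
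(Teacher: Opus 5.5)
Your first half matches the paper: Ahlfors--Gabard into $S^2_+$, Lemma~\ref{lem:hersch} with $\rho=\varphi_1$ or $\rho\equiv 1$, $f_3\in H^1_0(\Sigma)$ handled by $\mu_1(\Sigma)\ge 0$, and summation to $\int_\Sigma(\Ric(N,N)+|A_\Sigma|^2)\,dA+\int_{\partial\Sigma}A_{\partial M}(N,N)\,ds\le 4\pi d$. The gap is the passage from this to topology, and the obstacle you name is not the real one. $\mathrm{Scal}_M\ge 0$ follows from $\Ric\ge 0$, and $A_{\partial M}(T,T)\ge 0$ follows from convexity, since convexity is a condition on the whole form. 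What fails is the coefficient of $K_\Sigma$. Your splitting gives $\Ric(N,N)+|A_\Sigma|^2\ge \tfrac12(H_\Sigma^2+|A_\Sigma|^2)-K_\Sigma$. After Gauss--Bonnet this is exactly your target $-2\pi\chi(\Sigma)+\dots\le 4\pi d$, i.e.\ $2g+b-2\le 2(g+b)$, which is vacuous. The paper instead writes the Gauss equation as
\[
\Ric(N,N)+|A_\Sigma|^2=\Ric(e_1,e_1)+\Ric(e_2,e_2)+H_\Sigma^2-2K_\Sigma\geq H_\Sigma^2-2K_\Sigma .
\]
This charges all of $\Ric(N,N)+|A_\Sigma|^2$ to curvature and doubles the $K_\Sigma$ term. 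Gauss--Bonnet with $k_g=A_{\partial M}(T,T)$ then gives
\[
\int_\Sigma H_\Sigma^2\,dA+\int_{\partial\Sigma}\big(A_{\partial M}(N,N)+2A_{\partial M}(T,T)\big)\,ds\leq 4\pi(d+2-2g-b)\leq 4\pi(2-g).
\]
Your proposed remedy cannot replace this step. The sum $\sum_i Q_\Sigma(f_i)$ is unchanged by rotations about the vertical axis. Moreover, the index form only ever contains $\Ric(N,N)$ and $A_{\partial M}(N,N)$, so no averaging of test functions produces the full Ricci trace or $H_{\partial M}$. Those terms enter only through the Gauss equation and Gauss--Bonnet.

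Even with the sharp inequality, your case analysis does not close. In $2g+b-2\le d\le g+b$ the $b$ cancels, so it yields only $g\le 2$ and gives no bound on $b$ at all. Neither $b\le 3$ nor $b=1$ at genus two can come from an integer check. They also cannot come from the equality case, which only forces $d=g+b$, $H_\Sigma=0$ and $A_{\partial M}=0$ along $\partial\Sigma$. The paper imports the boundary-component bound from item (i) of Theorem 1.2 in \cite{fraser2015Riemannsurfaces}. It notes in Remark~\ref{rmk:cfp} that this argument carries over to the stable CMC case by using $h=1$ in place of the Robin eigenfunction. Without such an additional input, your argument (once corrected as above) gives the genus bound and the genus-two conclusions $H_\Sigma=0$, $A_{\partial M}=0$, but not the statement about $\partial\Sigma$.
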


\begin{proof}
    Let $g$ be the genus of $\Sigma$ and let $b$ be the number of connected components of $\partial \Sigma$. Suppose $\Sigma$ is a stable free boundary constant mean curvature surface. By the Ahlfors-Gabard theorem \cite{ahlfors1950open, gabard2006representation}, there exists a proper conformal branched cover $F = (f_1,f_2,f_3): \Sigma \to S^2_+$ of degree $d \leq g + b$. Composing it with the conformal map of Lemma \ref{lem:hersch} and keeping the same notation, its degree does not change and, by taking $\rho = 1$, we may assume that
    \[\int_\Sigma f_1\, dA = \int_\Sigma  f_2\, dA = 0\]
    and $f_3 \in H_0^1(\Sigma)$. By stability, $Q_\Sigma(f_1) \geq 0$ and $Q_\Sigma(f_2) \geq 0$. Since $\mu_1(\Sigma) \geq 0$, then $Q_\Sigma(f_3) \geq 0$. Therefore
    \begin{align*}
        0 &\leq Q_\Sigma(f_1) + Q_\Sigma(f_2) + Q_\Sigma(f_3)\phantom{\int}\\
        &= \int_\Sigma |\nabla F|^2\, dA - \int_\Sigma\big(\Ric(N, N) + |A_\Sigma|^2\big)\, dA - \int_{\partial\Sigma} A_{\partial M}(N, N)\, ds.
    \end{align*}
    Since $F$ is conformal and has degree $d$,
    \[\int_\Sigma|\nabla F|^2\, dA = 4\pi d.\]
    Hence
    \begin{equation}\label{eq:inequality1}
        \int_\Sigma\big(\Ric(N, N) + |A_\Sigma|^2\big)\, dA + \int_{\partial\Sigma} A_{\partial M}(N, N)\, ds \leq 4\pi d.
    \end{equation}

    Let $\{e_1, e_2\}$ be a local orthonormal frame tangent to $\Sigma$. It follows from the Gauss equation that
    \begin{equation}\label{eq:inequality4}
        \Ric(N, N) + |A_\Sigma|^2 + 2K_\Sigma - H_\Sigma^2 = \Ric(e_1, e_1) + \Ric(e_2, e_2) \geq 0.
    \end{equation}
    Thus, from \eqref{eq:inequality1},
    \begin{equation}\label{eq:inequality2}
        \int_\Sigma H_\Sigma^2\, dA - 2\int_\Sigma K_\Sigma\, dA + \int_{\partial\Sigma}A_{\partial M}(N, N)\, ds \leq 4\pi d.
    \end{equation}
    On the other hand, since $\Sigma$ meets $\partial M$ orthogonally, the geodesic curvature of $\partial\Sigma$ in $\Sigma$ is $\kappa = A_{\partial M}(T, T)$, where $T$ is a unit tangent vector to $\partial \Sigma$. Hence, by the Gauss-Bonnet theorem,
    \[\int_\Sigma K_\Sigma\, dA + \int_{\partial\Sigma}A_{\partial M}(T, T)\, ds = 2\pi(2 - 2g - b).\]
    Combining this identity with \eqref{eq:inequality2}, we obtain
    \[\int_\Sigma H_\Sigma^2\, dA + \int_{\partial\Sigma} \big(A_{\partial M}(N, N) + 2A_{\partial M}(T, T)\big)\, ds \leq 4\pi (d + 2 - 2g - b).\]
    Since $d \leq g + b$,
    \begin{equation}\label{eq:inequality3}
        \int_\Sigma H_\Sigma^2\, dA + \int_{\partial\Sigma}\big(A_{\partial M}(N, N) + 2A_{\partial M}(T, T)\big)\, ds \leq 4\pi(2 - g).
    \end{equation}
    Since $A_{\partial M} \geq 0$, we get $g \leq 2$, and now the bounds for $b$ follow directly from item (i) of Theorem 1.2 in \cite{fraser2015Riemannsurfaces} (see Remark \ref{rmk:cfp} below). If $g = 2$, then inequality \eqref{eq:inequality3} becomes an equality, and it implies that $H_\Sigma$ vanishes on $\Sigma$ and $A_{\partial M}$ vanishes on $\partial \Sigma$.\smallskip

    Now, assume that $\Sigma$ is an index one free boundary minimal surface, and denote by $\rho_1$ a positive first Robin eigenfunction of the Jacobi operator $L_\Sigma$. By the Ahlfors-Gabard theorem, there exists a proper conformal branched cover $F = (f_1,f_2,f_3): \Sigma \to S^2_+$ of degree $d \leq g + b$, which we can balance with Lemma \ref{lem:hersch} in order to obtain that
    \[\int_\Sigma \rho_1 f_1\,dA = \int_\Sigma \rho_1 f_2\,dA = 0\]
    and $f_3 \in H_0^1(\Sigma)$ so that $Q_\Sigma(f_1) \geq 0$ and $Q_\Sigma(f_2) \geq 0$ since $\Sigma$ has index one, and $Q_\Sigma(f_3) \geq 0$ because $\mu_1(\Sigma) \geq 0$. The rest of the argument is analogous.
\end{proof}

\begin{remark}\label{rmk:cfp}
    We have applied Theorem 1.2 of \cite{fraser2015Riemannsurfaces} to the constant mean curvature case, although it is stated for index one free boundary minimal surfaces. However, the same argument with $h = 1$ instead of the first Robin eigenfunction of $L_\Sigma$ yields exactly the same result.
\end{remark}

It follows directly from inequalities \eqref{eq:inequality4} and \eqref{eq:inequality3} that the genus two case can be easily ruled out by imposing restrictions on the geometry of the ambient manifold.

\begin{corollary}
    Let $\Sigma$ be a connected, two-sided and orientable surface that is either index one as a free boundary minimal surface or stable as a free boundary constant mean curvature surface in a Riemannian three-manifold $M$ with smooth boundary such that
    \begin{itemize}
        \item[$\mathrm{(i)}$] either $\partial M$ is strictly convex and $M$ has non-negative Ricci curvature;
        \item[$\mathrm{(ii)}$] or $\partial M$ is convex and $M$ has positive Ricci curvature.
    \end{itemize}
    If $\mu_1(\Sigma) \geq 0$, then $g(\Sigma) \leq 1$ and $b(\partial \Sigma) \leq 3$.
\end{corollary}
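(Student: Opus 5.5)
The plan is to apply the preceding theorem and rule out its alternative (ii) under either extra hypothesis. Both hypotheses of the corollary imply those of the theorem: strict convexity implies convexity, and positive Ricci curvature implies non-negative Ricci curvature. So if $\mu_1(\Sigma)\geq 0$, either $g(\Sigma)\leq 1$ and $b(\partial\Sigma)\leq 3$, and we are done, or we are in case (ii). We suppose case (ii) holds, so $g = 2$, $b = 1$, $H_\Sigma = 0$ and $A_{\partial M} = 0$ along $\partial\Sigma$, and we derive a contradiction.

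Under hypothesis (i), strict convexity gives $A_{\partial M}(N,N) > 0$ at every point of $\partial\Sigma$, which is non-empty. This is incompatible with $A_{\partial M}=0$ along $\partial\Sigma$. Equivalently, the boundary integral in \eqref{eq:inequality3} is strictly positive, so \eqref{eq:inequality3} forces $g<2$.

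Under hypothesis (ii), we revisit the chain of inequalities in the proof of the theorem. Passing from \eqref{eq:inequality1} to \eqref{eq:inequality2} used \eqref{eq:inequality4}, that is, the pointwise bound
\[\Ric(N,N)+|A_\Sigma|^2 \geq H_\Sigma^2 - 2K_\Sigma,\]
which holds because $\Ric(e_1,e_1)+\Ric(e_2,e_2)\geq 0$. With $\Ric>0$ this difference is strictly positive everywhere on $\Sigma$, so integrating over $\Sigma$ (which has positive area) makes \eqref{eq:inequality2}, and hence \eqref{eq:inequality3}, strict. Explicitly, we get
\[\int_\Sigma H_\Sigma^2\,dA + \int_{\partial\Sigma}\big(A_{\partial M}(N,N)+2A_{\partial M}(T,T)\big)\,ds < 4\pi(2-g).\]
Since the left-hand side is non-negative by convexity, this gives $g<2$ and excludes case (ii). The bound $b\leq 3$ then comes from alternative (i) of the theorem.

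The only step needing care is checking that in the genus two case the theorem's proof really forces equality in every intermediate inequality, including \eqref{eq:inequality4} integrated over $\Sigma$, and not merely in \eqref{eq:inequality3}. Tracing the argument settles this. From $g=2$ and $d\leq g+b$, \eqref{eq:inequality3} reads $\text{LHS}\leq 0$. The left-hand side is non-negative, and it sits below the strict version of the chain. So any strictly positive slack, whether from the Ricci term or from the boundary term, yields a contradiction.
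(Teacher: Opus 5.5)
Your proposal is correct and follows the paper's own argument. Under hypothesis (i), the boundary term in \eqref{eq:inequality3} is strictly positive; under hypothesis (ii), the pointwise inequality \eqref{eq:inequality4} is strict. Either way the chain of inequalities gives $g<2$, and alternative (i) of the theorem then supplies $b(\partial\Sigma)\leq 3$. This is exactly what the paper means by saying the corollary "follows directly from inequalities \eqref{eq:inequality4} and \eqref{eq:inequality3}."
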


Also, item (ii) of Theorem \ref{thm:main1} cannot be true by a spectral restriction on $\Sigma$. Indeed, if $\mu_1(\Sigma) > 0$, one has $Q_\Sigma(f_3) > 0$ and thus the inequalities \eqref{eq:inequality1}, \eqref{eq:inequality2} and \eqref{eq:inequality3} become strict.

\begin{corollary}
    Let $\Sigma$ be a connected, two-sided and orientable surface that is either index one as a free boundary minimal surface or stable as a free boundary constant mean curvature surface in a Riemannian three-manifold $M$ with convex boundary and non-negative Ricci curvature. If $\mu_1(\Sigma) > 0$, then $g(\Sigma) \leq 1$ and $b(\partial \Sigma) \leq 3$.
\end{corollary}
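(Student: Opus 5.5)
We will rerun the proof of the preceding theorem and use the strict inequality $\mu_1(\Sigma) > 0$ to exclude its alternative (ii). The bound $b(\partial\Sigma) \leq 3$ is already available in case (i) via Remark \ref{rmk:cfp}. So the only task is to show that $g = 2$ is impossible.

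First we set up the test functions exactly as before. By the Ahlfors-Gabard theorem there is a proper conformal branched cover $F = (f_1, f_2, f_3) \colon \Sigma \to S^2_+$ of degree $d \leq g + b$. We balance it with Lemma \ref{lem:hersch}, taking $\rho = 1$ in the constant mean curvature case and $\rho = \rho_1$, the positive first Robin eigenfunction, in the index one minimal case. Then $Q_\Sigma(f_1) \geq 0$ and $Q_\Sigma(f_2) \geq 0$ follow from stability or from index one, respectively, while $f_3 \in H^1_0(\Sigma)$.

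The key point is that $f_3 \not\equiv 0$. Since $F$ maps $\intt(\Sigma)$ into $\intt(S^2_+)$, the function $f_3$ is strictly positive in the interior. Hence $\int_\Sigma f_3^2\, dA > 0$. The variational characterization of $\mu_1(\Sigma)$ then gives
\[Q_\Sigma(f_3) \geq \mu_1(\Sigma) \int_\Sigma f_3^2\, dA > 0.\]
Summing the three quadratic forms, we obtain \eqref{eq:inequality1} with strict inequality. This strictness survives the Gauss equation \eqref{eq:inequality4} and the Gauss-Bonnet substitution, because those steps only add terms with non-negative signs or use equalities. Together with $d \leq g + b$, this yields
\[\int_\Sigma H_\Sigma^2\, dA + \int_{\partial\Sigma}\big(A_{\partial M}(N,N) + 2A_{\partial M}(T,T)\big)\, ds < 4\pi(2 - g).\]
Convexity makes the left side non-negative, so $g < 2$, that is, $g \leq 1$.

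For the boundary components, we then invoke item (i) of Theorem 1.2 in \cite{fraser2015Riemannsurfaces}, adapted as in Remark \ref{rmk:cfp} for the constant mean curvature case. Since $g \leq 1$, it gives $b \leq 3$.

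The only delicate point is the positivity $\int_\Sigma f_3^2\, dA > 0$, which upgrades $Q_\Sigma(f_3) \geq 0$ to a strict inequality. It holds because the balancing map $T_v$ preserves the open hemisphere.
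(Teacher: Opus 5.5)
Your proposal is correct and follows the paper's argument: you rerun the proof of the theorem with $Q_\Sigma(f_3) \geq \mu_1(\Sigma)\int_\Sigma f_3^2\, dA > 0$, which makes \eqref{eq:inequality1}, \eqref{eq:inequality2} and \eqref{eq:inequality3} strict and forces $g \leq 1$, and you get $b \leq 3$ from item (i) of Theorem 1.2 in \cite{fraser2015Riemannsurfaces} as in Remark \ref{rmk:cfp}. Your remark that $f_3 > 0$ on $\intt(\Sigma)$, because $T_v$ preserves the open hemisphere, usefully spells out why $f_3 \not\equiv 0$, which the paper leaves implicit.
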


Finally, when $\Sigma$ has non-zero constant mean curvature, it follows from \eqref{eq:inequality3} that its genus must be at most one.

\begin{corollary}
    Let $\Sigma$ be a connected, two-sided and orientable stable free boundary constant mean curvature surface in a Riemannian three-manifold $M$ with convex boundary and non-negative Ricci curvature. If $\mu_1(\Sigma) \geq 0$ and $H_\Sigma \neq 0$, then $g(\Sigma) \leq 1$ and $b(\partial \Sigma) \leq 3$.
\end{corollary}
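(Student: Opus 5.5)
The plan is to read the genus bound directly off inequality \eqref{eq:inequality3}. That inequality was obtained in the proof of the preceding theorem, both for stable free boundary constant mean curvature surfaces and for index one free boundary minimal surfaces, under exactly the hypotheses assumed here: convex boundary, non-negative Ricci curvature and $\mu_1(\Sigma) \geq 0$. Since $\Sigma$ is assumed stable with constant mean curvature, I will rerun that argument verbatim. First, the Ahlfors--Gabard map $F:\Sigma \to S^2_+$ of degree $d \leq g+b$ is balanced via Lemma \ref{lem:hersch} with $\rho = 1$. Then stability gives $Q_\Sigma(f_1), Q_\Sigma(f_2) \geq 0$, and $\mu_1(\Sigma)\ge 0$ gives $Q_\Sigma(f_3)\ge 0$. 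Finally, the Gauss equation \eqref{eq:inequality4} and Gauss--Bonnet are used as before. The outcome is
\[\int_\Sigma H_\Sigma^2\, dA + \int_{\partial\Sigma}\big(A_{\partial M}(N, N) + 2A_{\partial M}(T, T)\big)\, ds \leq 4\pi(2 - g).\]

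Next, I use that $H_\Sigma$ is a non-zero constant. The boundary integral is non-negative because $A_{\partial M} \geq 0$. Therefore
\[0 < H_\Sigma^2 |\Sigma| \leq 4\pi(2-g),\]
where $|\Sigma|>0$ is the area of $\Sigma$. This forces $g < 2$, that is, $g \leq 1$. Equivalently, in the dichotomy of the preceding theorem, alternative (ii) requires $H_\Sigma = 0$ and is therefore excluded. So alternative (i) holds, which already includes the bound $b(\partial\Sigma) \leq 3$.

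For the boundary count, if I want to argue independently rather than just quoting the theorem, I would invoke item (i) of Theorem 1.2 in \cite{fraser2015Riemannsurfaces} with test function $h=1$, as explained in Remark \ref{rmk:cfp}. Once $g \leq 1$ is known, that result gives $b \leq 3$. There is essentially no obstacle. The only point to check is that the strict positivity $H_\Sigma^2|\Sigma|>0$ is what excludes the equality case $g=2$. This holds because $\Sigma$ is a compact surface with positive area and $H_\Sigma$ is a non-zero constant.
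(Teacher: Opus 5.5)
Your proposal is correct and follows the paper's own route. The paper likewise reads $g(\Sigma)\le 1$ off \eqref{eq:inequality3}, since $\int_\Sigma H_\Sigma^2\,dA = H_\Sigma^2|\Sigma|>0$ and the boundary term is non-negative, which rules out alternative (ii) of the theorem; the bound $b(\partial\Sigma)\le 3$ then comes from alternative (i), which rests on \cite{fraser2015Riemannsurfaces} through Remark \ref{rmk:cfp}.
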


\section{Proof of Theorem \ref{thm:main2}}

In \cite{nunes2017stable}, Nunes shows that, for every stable free boundary constant mean curvature surface in a strictly convex domain of the Euclidean space one has $Q_\Sigma(f) \geq 0$ for all $f \in H^1_0(\Sigma)$. As a consequence, $\mu_1(\Sigma) \geq 0$. The proof relies on the existence of a Jacobi function on the surface that is constructed by means of the Euclidean structure and that is a negative direction of $Q_\Sigma$. Using this idea, we are able to generalize his result in the setting of index one free boundary minimal surfaces, as well as one by Tran \cite{tran2020index}.

\begin{lemma}[]\label{lem:nunes-tran1}
    Let $\Sigma$ be a connected and two-sided index one free boundary minimal surface in a Riemannian three-manifold $M$. Then $\mu_1(\Sigma) > 0$ if and only if there exists $\phi \in C^\infty(\Sigma)$ such that $L_\Sigma(\phi) = 0$ and $Q_\Sigma(\phi) < 0$.
\end{lemma}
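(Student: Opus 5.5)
We prove the two implications separately. Both rest on the boundary formula \eqref{eq:indexform}: a Jacobi field $\phi$ satisfies $L_\Sigma\phi=0$, so for every $h$
\[Q_\Sigma(\phi,h)=\int_{\partial\Sigma} h\big(\nu(\phi)-A_{\partial M}(N,N)\phi\big)\,ds .\]
In particular $Q_\Sigma(\phi,h)=0$ for all $h\in H^1_0(\Sigma)$.

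\emph{($\Leftarrow$)} Suppose $\phi$ is a Jacobi function with $Q_\Sigma(\phi)<0$. First, $\mu_1(\Sigma)\ge 0$. Indeed, if some $f\in H^1_0(\Sigma)$ had $Q_\Sigma(f)<0$, then on $V=\mathrm{span}\{\phi,f\}$ the cross term vanishes because $f$ vanishes on $\partial\Sigma$. Hence
\[Q_\Sigma(a\phi+bf)=a^2Q_\Sigma(\phi)+b^2Q_\Sigma(f)<0\quad\text{for }(a,b)\neq(0,0).\]
Here $\phi$ and $f$ are independent, since $\phi|_{\partial\Sigma}\ne0$; otherwise $Q_\Sigma(\phi)=0$. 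So $V$ is a two-dimensional negative subspace, contradicting index one.

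To get strictness, suppose $\mu_1(\Sigma)=0$ and let $u>0$ be a first Dirichlet eigenfunction. Then $L_\Sigma u=0$, $u=0$ on $\partial\Sigma$, and $Q_\Sigma(u)=0$. Consider $\phi+tu$. We have $Q_\Sigma(\phi,u)=0$ by the formula above, so
\[Q_\Sigma(\phi+tu,h)=Q_\Sigma(\phi,h)+t\,Q_\Sigma(u,h).\]
Moreover $Q_\Sigma(u,h)=\int_{\partial\Sigma}h\,\nu(u)\,ds$. Since $u$ vanishes on $\partial\Sigma$, is positive inside, and has $\nu(u)\ne0$ by Hopf's lemma, this functional on $h$ is nonzero.

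Now take $W=\mathrm{span}\{\phi,u\}$. It is two-dimensional and satisfies
\[Q_\Sigma(a\phi+bu)=a^2Q_\Sigma(\phi)\le0,\]
so $Q_\Sigma$ is negative semidefinite on $W$. Index one means the negative eigenspace is spanned by the first Robin eigenfunction $\rho_1$, with eigenvalue $\lambda_1<0$. Then $W\cap\rho_1^{\perp_{L^2}}$ is nonzero, and on $\rho_1^\perp$ the form is $\ge0$. So some $w=a\phi+bu\ne0$ with $w\perp\rho_1$ has $Q_\Sigma(w)=0$. Since $Q_\Sigma(w)=a^2Q_\Sigma(\phi)$, we get $a=0$, so $w=u$. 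Thus $u\perp\rho_1$ and $Q_\Sigma(u)=0$ with $Q_\Sigma\ge0$ on $\rho_1^\perp$, so $u$ minimizes there. Hence $u$ lies in the kernel of the restricted form, i.e.\ $Q_\Sigma(u,h)=c\langle\rho_1,h\rangle$ for all $h$.

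Take $h$ supported in the interior. The left side is then zero, so $c\rho_1=0$, giving $c=0$. Then $Q_\Sigma(u,h)=0$ for all $h$, so $\nu(u)=0$ on $\partial\Sigma$, contradicting Hopf. Hence $\mu_1(\Sigma)>0$.

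\emph{($\Rightarrow$)} Suppose $\mu_1(\Sigma)>0$. Then $L_\Sigma$ with Dirichlet data is invertible. Pick boundary data $g$ and solve
\[L_\Sigma\phi=0 \text{ in } \Sigma,\qquad \phi=g \text{ on } \partial\Sigma.\]
We want some $g$ with $Q_\Sigma(\phi_g)<0$. Let $\rho_1$ be the negative Robin eigenfunction and write $\rho_1=\phi_g+v$ with $g=\rho_1|_{\partial\Sigma}$ and $v\in H^1_0(\Sigma)$. Since $Q_\Sigma(\phi_g,v)=0$,
\[Q_\Sigma(\rho_1)=Q_\Sigma(\phi_g)+Q_\Sigma(v).\]
The left side is negative and $Q_\Sigma(v)\ge\mu_1\|v\|^2\ge0$, so $Q_\Sigma(\phi_g)<0$. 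Smoothness of $\phi_g$ follows from elliptic regularity, since $g$ is smooth.

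The main obstacle is the strictness step in ($\Leftarrow$), namely the Hopf-lemma and kernel argument. I would verify it carefully, possibly replacing it with a perturbation $\phi+tu$ combined with a Robin-boundary variation.
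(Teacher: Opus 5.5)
Your proof is correct. The implication $(\Rightarrow)$ is the paper's argument: the paper starts from any $f$ with $Q_\Sigma(f)<0$, and you take $f=\rho_1$.

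For $(\Leftarrow)$ your route is different and longer.

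\emph{The paper's route.} It treats $\mu_1(\Sigma)\le 0$ in one stroke. Index one forces $\int_\Sigma\rho_1\phi\,dA\neq 0$, and positivity gives $\int_\Sigma\rho_1\psi_1\,dA>0$. So after rescaling $\phi$, the function $\psi_1-\phi$ is $L^2$-orthogonal to $\rho_1$, while $Q_\Sigma(\psi_1-\phi)=Q_\Sigma(\psi_1)+Q_\Sigma(\phi)<0$. This contradicts $Q_\Sigma\geq 0$ on $\rho_1^\perp$.

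\emph{Your route.} You split into two cases.
\begin{itemize}
\item For $\mu_1(\Sigma)<0$, you use the two-dimensional negative definite subspace $\mathrm{span}\{\phi,f\}$. This is a nice dimension count that needs no positivity of eigenfunctions at all.
\item For $\mu_1(\Sigma)=0$, you intersect $\mathrm{span}\{\phi,u\}$ with $\rho_1^\perp$, then run a Lagrange multiplier argument plus Hopf's lemma.
\end{itemize}
The last stage is valid, since Hopf's lemma at a boundary zero of $u\geq 0$ holds irrespective of the sign of the potential in $L_\Sigma$. But it is superfluous. As soon as you have derived $u\perp\rho_1$ you are done, because $u>0$ and $\rho_1>0$ in $\intt(\Sigma)$ give $\int_\Sigma\rho_1u\,dA>0$.

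With this observation, your $\mu_1(\Sigma)=0$ case collapses to the paper's argument. The nonzero element of $\mathrm{span}\{\phi,u\}\cap\rho_1^\perp$ is, up to scaling, the paper's $\psi_1-\phi$, whose $\phi$-component cannot vanish. The Hopf and kernel step you flag as the ``main obstacle'' is then not needed, and the digression on $\phi+tu$ can be deleted.
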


\begin{proof}
    Suppose $\mu_1(\Sigma) > 0$. Then there exists $f \in C^\infty(\Sigma)$ with $Q_\Sigma(f) < 0$ such that the Dirichlet problem
    \[\left\{
    \begin{aligned}
        L_\Sigma(\phi) &= 0\,\mbox{ in }\,\Sigma\\
        \phi &= f\,\mbox{ on }\,\partial\Sigma
    \end{aligned}
    \right.\]
    admits a unique solution. Set $u = f - \phi$. Then $u \in H_0^1(\Sigma)$, and consequently
    \[Q_\Sigma(u) \geq \mu_1(\Sigma) \int_\Sigma u^2\, dA \geq 0.\]
    Since $L_\Sigma(\phi) = 0$, identity \eqref{eq:indexform} shows that $Q_\Sigma(u, \phi) = 0$, and therefore
    \[Q_\Sigma(\phi) = Q_\Sigma(f) - Q_\Sigma(u) < 0.\]

    Now, suppose there exists $\phi \in C^\infty(\Sigma)$ such that $L_\Sigma(\phi) = 0$ and $Q_\Sigma(\phi) < 0$. Let $\rho_1 > 0$ be a first eigenfunction of the Robin problem for $L_\Sigma$. Since $\Sigma$ is index one,
    \[\int_\Sigma \rho_1 \phi\, dA \neq 0.\]
    Suppose, by contradiction, that $\mu_1(\Sigma) \leq 0$, and let $\psi_1$ be a non-negative first Dirichlet eigenfunction of $L_\Sigma$. Then $Q_\Sigma(\psi_1) \leq 0$, and since $\psi_1 > 0$ in $\intt(\Sigma)$,
    \[\int_\Sigma \rho_1 \psi_1\, dA > 0.\]
    Multiplying $\phi$ by a constant, we may assume that
    \[\int_\Sigma \rho_1 \psi_1\, dA = \int_\Sigma \rho_1 \phi\, dA.\]
    In particular, the function $f = \psi_1 - \phi$ is orthogonal to $\rho_1$. Since $L_\Sigma(\phi) = 0$ and $\psi_1 = 0$ along $\partial \Sigma$, we must have $Q_\Sigma(\psi_1, \phi) = 0$ by identity \eqref{eq:indexform}, and thus
    \[Q_\Sigma(f) = Q_\Sigma(\psi_1) + Q_\Sigma(\phi) < 0,\]
    contradicting that $\Sigma$ is index one. As a result, $\mu_1(\Sigma) > 0$.
\end{proof}

We also extend the results of Nunes \cite{nunes2017stable} and Tran \cite{tran2020index} in the constant mean curvature setting. But, in this case, we need to impose suitable geometric restrictions on the ambient Riemannian three-manifold in order to find at least one direction on which the index form is negative.

\begin{lemma}[]\label{lem:nunes-tran2}
    Let $\Sigma$ be a connected and two-sided stable free boundary constant mean curvature surface in a Riemannian three-manifold $M$ with smooth boundary such that
    \begin{itemize}
        \item[$\mathrm{(i)}$] either $\partial M$ is strictly convex and $M$ has non-negative Ricci curvature;
        \item[$\mathrm{(ii)}$] or $\partial M$ is convex and $M$ has positive Ricci curvature.
    \end{itemize}
    Then $\mu_1(\Sigma) > 0$ if and only if there exists $\phi \in C^\infty(\Sigma)$ such that $L_\Sigma(\phi) = 0$ and $Q_\Sigma(\phi) < 0$.
\end{lemma}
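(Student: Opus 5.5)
The plan is to follow the proof of Lemma \ref{lem:nunes-tran1} closely. The only changes are that the constraint ``orthogonal to $\rho_1$'' is replaced by ``zero mean value'', and that a negative direction of $Q_\Sigma$ is supplied by the geometric hypotheses (i) or (ii) rather than by the index.

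\emph{Forward direction.} Suppose $\mu_1(\Sigma) > 0$.
\begin{enumerate}
    \item I first produce a smooth $f$ with $Q_\Sigma(f) < 0$, and take the simplest candidate $f = 1$. Then
    \[Q_\Sigma(1) = -\int_\Sigma \big(\Ric(N,N) + |A_\Sigma|^2\big)\, dA - \int_{\partial\Sigma} A_{\partial M}(N,N)\, ds.\]
    \item Under (ii), $\Ric(N,N) > 0$ makes the area integral strictly negative, while the boundary integral is non-positive by convexity.
    \item Under (i), the area integral is non-positive. Since $\partial\Sigma \neq \emptyset$, $N$ is tangent to $\partial M$ along $\partial\Sigma$, and $A_{\partial M} > 0$, the boundary term is strictly negative.
    \item Since $\mu_1(\Sigma) > 0$, zero is not a Dirichlet eigenvalue of $L_\Sigma$. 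Hence the problem $L_\Sigma(\phi) = 0$ in $\Sigma$, $\phi = 1$ on $\partial\Sigma$, has a unique smooth solution.
    \item Set $u = 1 - \phi \in H^1_0(\Sigma)$, so that $Q_\Sigma(u) \geq \mu_1(\Sigma)\int_\Sigma u^2\, dA \geq 0$.
    \item By \eqref{eq:indexform} applied with $L_\Sigma(\phi) = 0$ and $u = 0$ on $\partial\Sigma$, we get $Q_\Sigma(\phi, u) = 0$.
    \item Therefore $Q_\Sigma(\phi) = Q_\Sigma(1) - Q_\Sigma(u) < 0$.
\end{enumerate}

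\emph{Converse.} Let $\phi$ satisfy $L_\Sigma(\phi) = 0$ and $Q_\Sigma(\phi) < 0$. Suppose by contradiction that $\mu_1(\Sigma) \leq 0$, and let $\psi_1 \geq 0$ be a first Dirichlet eigenfunction. Then $\psi_1 > 0$ in $\intt(\Sigma)$ and $Q_\Sigma(\psi_1) = \mu_1(\Sigma)\int_\Sigma \psi_1^2\, dA \leq 0$.
\begin{enumerate}
    \item If $\int_\Sigma \phi\, dA = 0$, then $\phi$ is an admissible test function with $Q_\Sigma(\phi) < 0$. This contradicts stability immediately.
    \item Otherwise, set $c = \int_\Sigma \psi_1\, dA \big/ \int_\Sigma \phi\, dA$, which is nonzero since $\int_\Sigma \psi_1\, dA > 0$. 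Then $f = \psi_1 - c\phi$ has zero mean value.
    \item Using \eqref{eq:indexform} with $L_\Sigma(\phi) = 0$ and $\psi_1 = 0$ on $\partial\Sigma$, we get $Q_\Sigma(\psi_1, \phi) = 0$.
    \item Hence $Q_\Sigma(f) = Q_\Sigma(\psi_1) + c^2 Q_\Sigma(\phi) < 0$, contradicting stability. So $\mu_1(\Sigma) > 0$.
\end{enumerate}

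The only step that genuinely uses hypotheses (i) or (ii) is finding a direction with $Q_\Sigma(f) < 0$. The constant function is expected to handle it under either hypothesis, so I anticipate no real obstacle. The one point to check carefully is strict negativity in case (i), which requires $\partial\Sigma$ to have positive length. The remaining regularity issue is justifying \eqref{eq:indexform} for the $H^1_0$ function $\psi_1$. This follows from the smoothness of Dirichlet eigenfunctions up to the boundary.
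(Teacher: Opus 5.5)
Your proposal is correct and follows the paper's proof. The forward direction solves the Dirichlet problem with boundary data $1$ and uses $Q_\Sigma(1)<0$ from (i) or (ii), and the converse is the argument of Lemma \ref{lem:nunes-tran1} with $\rho_1$ replaced by the constant $1$. Your explicit check that $Q_\Sigma(1)<0$ and your case split on $\int_\Sigma \phi\, dA$ just spell out steps the paper leaves implicit.
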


\begin{proof}
    Suppose $\mu_1(\Sigma) > 0$. Then for every $f \in C^\infty(\Sigma)$ such that $Q_\Sigma(f) < 0$ the Dirichlet problem
    \[\left\{
    \begin{aligned}
        L_\Sigma(\phi) &= 0\,\mbox{ in }\,\Sigma\\
        \phi &= f\,\mbox{ on }\,\partial\Sigma
    \end{aligned}
    \right.\]
    admits a unique solution. Take $f = 1$, and set $u = 1 - \phi$. Then $u \in H_0^1(\Sigma)$, and consequently
    \[Q_\Sigma(u) \geq \mu_1(\Sigma) \int_\Sigma u^2\, dA \geq 0.\]
    Since $L_\Sigma(\phi) = 0$, identity \eqref{eq:indexform} shows that $Q_\Sigma(u, \phi) = 0$, and therefore
    \[Q_\Sigma(\phi) = Q_\Sigma(1) - Q_\Sigma(u) < 0\]
    by the geometric hypotheses on $M$ and $\partial M$.\smallskip
    
    The converse statement is proved as in Lemma \ref{lem:nunes-tran1} by replacing the first Robin eigenfunction $\rho_1$ with the constant function $1$.
\end{proof}

Let $G$ be a Lie group, $H \subset G$ a closed subgroup, and let $\mathfrak{g}$ and $\mathfrak{h}$ denote their Lie algebras, respectively. Suppose that $\mathfrak{g}$ is endowed with an $\operatorname{Ad}(G)$-invariant inner product $\langle \cdot, \cdot\rangle$. Writing $\mathfrak{g} = \mathfrak{h} \oplus \mathfrak{h}^{\perp}$, we identify $\mathfrak{h}^{\perp}$ with $T_oM$, where $M = G/H$ and $o = eH$. The restriction of $\langle\cdot, \cdot\rangle$ to $\mathfrak{h}^{\perp}$ induces a $G$-invariant Riemannian metric on $M$. A homogeneous space endowed with a metric obtained in this way is called a \emph{normal homogeneous space}.\smallskip

Every normal homogeneous space has non-negative sectional curvature. Indeed, if $X, Y \in \mathfrak{h}^{\perp} \simeq T_oM$ are orthonormal, then
\[K_M(X,Y) = |[X,Y]_{\mathfrak{h}}|^2 + \frac{1}{4}|[X,Y]_{\mathfrak{h}^\perp}|^2 \geq 0.\]
In particular, $\Ric \geq 0$.\smallskip

Let $\{E_1, \ldots, E_n\}$ be an orthonormal basis of $\mathfrak{g}$, and let $X_\alpha$ be the fundamental vector field on $M$ associated with $E_\alpha$, namely
\[X_\alpha(p) = \frac{d}{dt}\Big|_{t=0} \exp(tE_\alpha)\cdot p.\]
Since the action of $G$ on $M$ is by isometries, each $X_\alpha$ is a Killing vector field. These Killing vector fields satisfy the Parseval formula
\[\sum_{\alpha = 1}^n \langle X_\alpha(p), v\rangle^2 = |v|^2\]
for every $p \in M$ and every $v \in T_pM$.\smallskip

We now prove the following.

\begin{theorem}
    Let $\Sigma$ be a connected, two-sided and orientable surface that is either index one as a free boundary minimal surface or stable as a free boundary constant mean curvature surface in a compact domain $\Omega$ of a three-dimensional normal homogeneous space $M = G/H$. Suppose $\partial \Omega$ is strictly convex. Then $g(\Sigma) \leq 1$ and $b(\partial \Sigma) \leq 3$.
\end{theorem}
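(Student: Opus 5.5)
The plan is to reduce to the Corollary following Theorem \ref{thm:main1}, case (i): $\Omega$ has strictly convex boundary and, being a domain in a normal homogeneous space, non-negative Ricci curvature. So it suffices to show $\mu_1(\Sigma) \geq 0$; in fact we aim for $\mu_1(\Sigma) > 0$. By Lemma \ref{lem:nunes-tran1} in the index one minimal case and Lemma \ref{lem:nunes-tran2}(i) in the stable CMC case, this amounts to producing a Jacobi field $\phi$ on $\Sigma$ with $L_\Sigma(\phi) = 0$ and $Q_\Sigma(\phi) < 0$.

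The natural candidates are the normal components of the Killing fields, $\phi_\alpha = \langle X_\alpha, N\rangle$, $\alpha = 1, \ldots, n$. Since each $X_\alpha$ generates a one-parameter group of ambient isometries, and isometries preserve minimality and constant mean curvature, we get $L_\Sigma(\phi_\alpha) = 0$. (In the CMC case the isometric images have the same $H$, so the normal variation is still a Jacobi field.) The $X_\alpha$ need not be tangent to $\partial\Omega$, so the $\phi_\alpha$ do not satisfy the Robin condition. This is why they can make $Q_\Sigma$ negative.

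To compute $Q_\Sigma(\phi_\alpha)$ we use \eqref{eq:indexform}: since $L_\Sigma\phi_\alpha = 0$,
\[Q_\Sigma(\phi_\alpha) = \int_{\partial\Sigma} \phi_\alpha\big(\nu(\phi_\alpha) - A_{\partial\Omega}(N,N)\phi_\alpha\big)\, ds.\]
We then sum over $\alpha$. By the Parseval formula, $\sum_\alpha \phi_\alpha^2 = |N|^2 = 1$, so the second boundary term sums to $-\int_{\partial\Sigma} A_{\partial\Omega}(N,N)\, ds$. For the first term, $\sum_\alpha \phi_\alpha\nu(\phi_\alpha) = \tfrac12 \nu\big(\sum_\alpha\phi_\alpha^2\big) = 0$. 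Hence
\[\sum_{\alpha=1}^n Q_\Sigma(\phi_\alpha) = -\int_{\partial\Sigma} A_{\partial\Omega}(N,N)\, ds < 0,\]
using strict convexity and $\partial\Sigma \neq \emptyset$. Therefore some $\phi_\alpha$ satisfies $Q_\Sigma(\phi_\alpha) < 0$, and it is a nontrivial Jacobi function. The relevant lemma then gives $\mu_1(\Sigma) > 0$, and the Corollary (or Theorem \ref{thm:main1}, whose second alternative is excluded since $A_{\partial\Omega} > 0$) yields $g(\Sigma) \leq 1$ and $b(\partial\Sigma) \leq 3$.

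The main points to verify carefully are the two identities used above. First, that Parseval holds pointwise for the fundamental fields of a normal homogeneous space, which the paper states. Second, that differentiating $\sum_\alpha\phi_\alpha^2 \equiv 1$ is legitimate, which needs only smoothness. One should also check that the converse direction in Lemma \ref{lem:nunes-tran1} and Lemma \ref{lem:nunes-tran2} genuinely applies here. In the CMC case its proof uses orthogonality to constants and stability only, so Lemma \ref{lem:nunes-tran2}(i) covers it, because $\Omega$ has $\Ric \geq 0$ and strictly convex boundary.
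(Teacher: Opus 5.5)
Your proposal is correct and follows the paper's proof essentially step for step: you use the Killing Jacobi fields $\phi_\alpha = \langle X_\alpha, N\rangle$, then the Parseval identity to get $\sum_\alpha Q_\Sigma(\phi_\alpha) = -\int_{\partial\Sigma} A_{\partial\Omega}(N,N)\,ds < 0$, then Lemmas \ref{lem:nunes-tran1} and \ref{lem:nunes-tran2} to obtain $\mu_1(\Sigma) > 0$, and finally Theorem \ref{thm:main1}. Your explicit remark that alternative (ii) is ruled out (by strict convexity, or by the corollary for $\mu_1(\Sigma) > 0$) makes precise a point the paper leaves implicit when it simply cites Theorem \ref{thm:main1}.
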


\begin{proof}
    Let $\{E_1, \ldots, E_n\}$ be an orthonormal basis of the Lie algebra of $G$, and let $X_\alpha$ be the fundamental vector field on $M$ associated with $E_\alpha$. For each $\alpha$, define $\phi_\alpha = \langle X_\alpha, N\rangle$, where $N$ is a globally defined unit normal vector field along $\Sigma$. Since $X_\alpha$ is Killing and $\Sigma$ has constant mean curvature, $\phi_\alpha$ satisfies $L_\Sigma(\phi_\alpha) = 0$. Moreover, by the Parseval formula, one has $\sum_{\alpha = 1}^n\phi_\alpha^2 = 1$, and hence
    \begin{align*}
        \sum_{\alpha = 1}^n Q_\Sigma(\phi_\alpha) &= \frac{1}{2} \int_{\partial\Sigma} \nu \Big(\sum_{\alpha = 1}^n \phi_\alpha^2 \Big)\, ds - \int_{\partial\Sigma} A_{\partial\Omega}(N, N) \sum_{\alpha=1}^n\phi_\alpha^2\,ds\\
        &= -\int_{\partial\Sigma} A_{\partial\Omega}(N, N)\, ds.
    \end{align*}
    Consequently, since $\partial \Omega$ is strictly convex, then $Q_\Sigma(\phi_\alpha) < 0$ for at least one value of $\alpha \in\{1, \ldots, n\}$. It follows from Lemmas \ref{lem:nunes-tran1} and \ref{lem:nunes-tran2} that $\mu_1(\Sigma) > 0$, and therefore Theorem \ref{thm:main1} implies $g \leq 1$ and $b \leq 3$.
\end{proof}

\bibliographystyle{acm}
\bibliography{references}
	
\end{document}